\newtheorem{defeng}{Definition}[section]
\newtheorem{theorem}[defeng]{Theorem}
\newtheorem{lemma}[defeng]{Lemma}
\newtheorem{conjecture}[defeng]{Conjecture}
\newcounter{claim}
\newenvironment{proof}[1][]%
 {\noindent {\setcounter{claim}{0}\sc proof ---
   }{#1}{}}{\hfill$\Box$\vspace{2ex}} 
\newenvironment{claim}[1][]%
{\refstepcounter{claim}\vspace{1ex}\noindent{\it{\textit{Claim }\arabic{claim}{#1}{: }}}\it}{\vspace{1ex}}
\newenvironment{proofclaim}[1][]%
	{\noindent {}{#1}{}}{ This proves Claim~\arabic{claim}.\vspace{1ex}}
\newcommand{\ov}{\overline}
\title{The Chen-Chv\'atal conjecture for metric spaces induced by distance-hereditary graphs}
\author{Pierre Aboulker\thanks{E-mail: pierreaboulker@gmail.com} {} and Rohan Kapadia\thanks{E-mail: rohan.f.kapadia@gmail.com} \\ Concordia University, Montr\'eal, Qu\'ebec, Canada}
\begin{document}

\maketitle

\begin{abstract}
A classical theorem of Euclidean geometry asserts that any noncollinear set of $n$ points in the plane determines at least 	$n$ distinct lines. Chen and Chv\'atal conjectured a generalization of this result to arbitrary finite metric spaces, with a particular definition of lines in a metric space. 
We prove it for metric spaces induced by connected distance-hereditary graphs -- a graph $G$ is called distance-hereditary if the distance between two vertices $u$ and $v$ in any connected induced subgraph $H$ of $G$ is equal to the distance between $u$ and $v$ in $G$.
\end{abstract}

\section{Introduction}

It is well-known that 
\begin{itemize}
\item every non-collinear set of $n$ points in the Euclidean plane determines at least $n$ distinct lines.
\end{itemize}

As noted by Erd\H{os} \cite{E43}, this fact is a corollary of the Sylvester-Gallai theorem (which asserts that, for every non-collinear set $S$ of $n$ points in the plane, some line goes through precisely two points of $S$).
Coxeter \cite{Cox} gave a proof of the Sylvester-Gallai theorem using \textit{ordered geometry}: that is, without using notions of measurement of distances or measurement of angles, but instead employing the notion of \textit{betweenness}. A point $b$ is said to \textit{lie between} points $a$ and $c$ if $b$ is the interior point of the line segment linking $a$ and $c$.
We write $[abc]$ for the statement that $b$ lies between $a$ and $c$. 
In this notation, a \textit{line} $\ov{uv}$ is defined (for any two distinct points $u$ and $v$) as
\begin{equation} \label{definitionofline}
\ov{uv}=\{u,v\} \cup \{ p:[puv]\vee [upv]\vee[uvp]\}.
\end{equation}

Betweenness in metric spaces was first studied by Menger \cite{menger}. 
In a metric space $(V,d)$, we define
\[
[abc]  \Leftrightarrow d(a,b) + d(b,c) = d(a,c).
\]

Hence, in any metric space $(V,d)$, we can define the line $\ov{uv}$ induced by two points $u$ and $v$ as in (\ref{definitionofline}).
With this definition of lines in metric spaces, Chen and Chv\'{a}tal \cite{CC1} proposed the following beautiful conjecture.
A line of a metric space $(V,d)$ is \textit{universal} if it contains all points of $V$.

\begin{conjecture} \label{conjChvatal}
 Every metric space on $n$ points, where $n\ge 2$, either has at least $n$ distinct lines or has a universal line.
\end{conjecture}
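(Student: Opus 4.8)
The plan is to attack Conjecture~\ref{conjChvatal} by contradiction, treating it as the natural metric generalization of the de Bruijn--Erd\H{o}s theorem and trying to transport that theorem's counting argument into the betweenness setting. Suppose $(V,d)$ is a metric space on $n \ge 2$ points that has no universal line and at most $n-1$ distinct lines; I aim to derive a contradiction. The case $n=2$ is trivial, since the single line $\overline{uv}$ equals $V$ and is therefore universal, so I may assume $n \ge 3$ and set up the contradiction for this range.

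First I would fix the incidence framework. Let $\mathcal{L}$ be the set of lines, so that $|\mathcal{L}| \le n-1$, and for each point $p$ let $r_p$ denote the number of lines containing $p$. In the classical de Bruijn--Erd\H{o}s situation the decisive observation is that, for a point $p$ and a line $\ell$ with $p \notin \ell$, the lines $\overline{pq}$ for $q \in \ell$ are pairwise distinct, yielding $r_p \ge |\ell|$; a Fisher-type double count over point--line incidences then forces $|\mathcal{L}| \ge n$, contradicting the assumption. The substance of the proof is therefore to recover, in the metric setting, enough of this structure to run an analogous count, using only the betweenness relation $[abc] \Leftrightarrow d(a,b)+d(b,c)=d(a,c)$ and the definition of $\overline{uv}$ in~(\ref{definitionofline}).

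The main obstacle --- and the reason the conjecture remains open in full generality --- is that metric lines do not form a linear space: two distinct points may lie together on several lines, so the map $q \mapsto \overline{pq}$ on a line $\ell$ avoiding $p$ need not be injective and the inequality $r_p \ge |\ell|$ can fail. My strategy for controlling this would be to quantify the failure of unique joinability through betweenness: whenever $\overline{pq_1}=\overline{pq_2}$ for distinct $q_1,q_2 \in \ell$, the points $p,q_1,q_2$ are collinear and hence realize a betweenness triple, and I would try to charge each such coincidence to its triple so as to bound the total multiplicity. Two concrete routes present themselves: (i) induct on $n$ by deleting a point whose removal decreases $|\mathcal{L}|$ by at most one while preserving non-universality, or (ii) select a well-chosen line $\ell_0$ and separately count the lines meeting $\ell_0$ and those avoiding it, comparing both tallies against $|\ell_0|$ and $n-|\ell_0|$.

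I expect the step of establishing the metric analogue of $r_p \ge |\ell|$ --- or an averaged substitute strong enough to close the count --- to be the crux, since this is precisely where arbitrary metric spaces resist the argument. It is no accident that the known cases all impose extra structure (for instance distances confined to $\{0,1,2\}$, or metrics induced by restricted graph classes) that keeps the overlaps between lines tame; a complete proof of the unrestricted statement would require a genuinely new device for bounding these overlaps uniformly, and supplying that device is the central difficulty I would anticipate.
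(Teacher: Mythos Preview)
The statement you are trying to prove is Conjecture~\ref{conjChvatal}, and the paper does \emph{not} prove it: it is stated as an open conjecture, and the paper's contribution is the special case of metric spaces induced by distance-hereditary graphs (Theorem~\ref{main}). So there is no ``paper's own proof'' of this statement to compare your proposal against.

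Your proposal, moreover, is not a proof but a programme: you set up the de Bruijn--Erd\H{o}s incidence count, correctly identify that metric lines do not form a linear space so the key inequality $r_p \ge |\ell|$ can fail, and then say that supplying a substitute for this inequality ``is the central difficulty I would anticipate.'' That is an accurate diagnosis of why the conjecture is open, but it leaves the decisive step entirely unaddressed. Neither of your two suggested routes --- deleting a point and inducting, or counting lines meeting versus avoiding a fixed line $\ell_0$ --- comes with any argument for why the resulting count closes; in general metric spaces both routes are known to get stuck for exactly the reason you name. In short, you have restated the problem and located the obstruction, but you have not proposed a mechanism to overcome it, so this is not a proof attempt that can be evaluated for correctness.

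If your intent was to prove what the paper actually proves (Theorem~\ref{main} for distance-hereditary graphs), note that the paper's argument is completely different from a de Bruijn--Erd\H{o}s count: it proceeds by taking a minimal counterexample, showing it is $2$-connected, exploiting the existence of two disjoint pairs of twins (Property~(DH\ref{prop4})), and deriving a contradiction from the structure of shortest paths and the crossing-chords property~(DH\ref{prop2}).
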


This conjecture is wide open today, but a positive answer would reveal an iceberg of which the original theorem of Euclidean geometry is a tip. 
Some partial results have been obtained: Bondy, Chen, Chv\'atal, Chiniforooshan, Miao and the first author~\cite{hypergraph} proved that any metric space on $n$ points $(n \geq 2)$ has at least $(2-o(1)) \log_2 n$ distinct lines.
Actually, this result holds in the more general framework of lines in $3$-uniform hypergraphs; more on this subject can be found in \cite{BBCCCCFZ:linesinhypergraphs}.


It suffices to prove Conjecture~\ref{conjChvatal} for metric spaces with integral distances\footnote{This was pointed out to us by Xiaomin Chen and Va\v sek Chv\'atal.}.
To see this, we note that the set of lines in a finite metric space $(V, d)$ depends only on whether or not $[uvw]$ holds for each triple $(u, v, w)$ of distinct points of $V$; in other words, it depends on the following system of linear equations and inequalities being satisfied with $x_{uv} = d(u, v)$ for all distinct $u, v$ in $V$:
\begin{align*}
 x_{uv} + x_{vw} - x_{uw} = 0, & \text{ if } [uvw] \text{ holds}, \\
 x_{uv} + x_{vw} - x_{uw} > 0, & \text{ if } [uvw] \text{ does not hold}.
\end{align*}
If $(V, d')$ is another metric space on the same ground set and the above system holds with $x_{uv} = d'(u, v)$ for all $u, v$ in $V$, then $(V, d')$ has the same set of lines as $(V, d)$.
This system has a non-negative solution given by the distances $d(u, v)$ of the metric space $(V, d)$. Since it is homogenous and has integer coefficients, it also admits a non-negative integral solution, which gives us a metric space $(V, d')$ with integral distances that has the same set of lines as $(V, d)$.

This observation motivates looking at two particular types of metric spaces. First, for a positive integer $k$, we define a \textit{k-metric space} to be a metric space in which all distances are integral and are at most $k$.
Chv\'{a}tal \cite{Chvatal} proved that every $2$-metric space on $n$ points $(n \ge 2)$ either has at least $n$ distinct lines or has a universal line.

A second type of metric space with integral distances arises from graphs.
Any finite connected graph induces a metric space on its vertex set, where the distance between two points $u$ and $v$ is defined as the smallest number of edges in a path linking $u$ and $v$.
Conjecture~\ref{conjChvatal} has been proved for metric spaces induced by chordal graphs \cite{BBC2}; these are the graphs with no induced cycles of length four or more.

Metric spaces induced by graphs can behave strangely when we take induced subgraphs. Indeed, let $G$ be a graph and $H$ a connected induced subgraph of $G$: then the metric space induced by $H$ may not be a subspace of the metric space induced by $G$.
This is because the distance between two vertices may be greater in $H$ than in $G$, if none of the shortest paths joining them in $G$ are contained in $H$.
The \emph{distance-hereditary} graphs are precisely the class of graphs in which this does not happen.
We denote the distance between two vertices $u$ and $v$ in a graph $G$ by $d_G(u, v)$. Then we say that $G$ is \emph{distance-hereditary} if for any connected induced subgraph $H$ of $G$ and for any pair of vertices $x, y$ in $H$, we have $d_H(x, y) = d_G(x, y)$.
This class of graphs is particularly interesting from the point of view of the Chen-Chv\'atal conjecture because of this property; the metric space induced by any connected induced subgraph of $G$ is actually a subspace of the metric space induced by $G$ itself.
The study of distance-hereditary graphs was initiated by Howorka \cite{Howorka} who characterized them in several ways.

In this paper, we prove the conjecture for metric spaces induced by these graphs:

\begin{theorem} \label{main}
Every metric space induced by a connected distance-hereditary graph on $n$ vertices, where $n \geq 2$, either has at least $n$ distinct lines or has a universal line.
\end{theorem}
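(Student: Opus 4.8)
The plan is to induct on the number $n$ of vertices, exploiting the recursive structure of distance-hereditary graphs. By Howorka's characterisations \cite{Howorka}, every connected distance-hereditary graph on at least two vertices can be reduced to a single vertex by repeatedly deleting a \emph{pendant} vertex (one of degree $1$) or one of a pair of \emph{twins} (vertices $v,v'$ with the same neighbours outside $\{v,v'\}$, either adjacent or not); hence such a graph always has a pendant vertex or a pair of twins, and this is the vertex I would delete to pass to a smaller graph. Throughout I may assume $G$ has diameter at least $3$, since otherwise its metric space is a $2$-metric space and the theorem follows from Chv\'atal's result \cite{Chvatal}; this also disposes of the base case.

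The engine of the induction is the defining property of our graphs: if $G'=G\sm v'$ is obtained by deleting one vertex, then betweenness among vertices of $G'$ is the same in $G'$ as in $G$, so $\ov{xy}^{G'}=\ov{xy}^{G}\cap V(G')$ for all $x,y\in V(G')$. Thus distinct lines of $G'$ have distinct restrictions and therefore lift to distinct lines of $G$. The pendant case is then immediate: if $v$ is pendant with unique neighbour $u$, every vertex $p$ satisfies $d(v,p)=1+d(u,p)=d(v,u)+d(u,p)$, i.e. $[vup]$, so $\ov{vu}=V(G)$ is a universal line and we are done.

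If $G$ has no pendant vertex it has a pair of twins $v,v'$; set $G'=G\sm v'$ and observe that the transposition $\tau=(v\,v')$ is an automorphism of $G$, so $\ov{v'w}=\tau(\ov{vw})$ and hence $v\in\ov{v'w}\Leftrightarrow v'\in\ov{vw}$ for each $w\neq v,v'$. I would use this to manufacture one \emph{new} line: any line $\ov{xy}$ with $x,y\in V(G')$ that contains $v'$ must also contain $v$ (either because $v\in\{x,y\}$, or because the line is $\tau$-invariant), so a line through $v'$ that avoids $v$ is distinct from every lifted line. A short computation shows $v'\notin\ov{vw}$ always for true twins, and for false twins exactly when $w\notin N(v)$; since the diameter exceeds $2$, the case $N(v)=V(G)\sm\{v,v'\}$ cannot occur, so a suitable $w$ always exists. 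Provided $G'$ has at least $n-1$ lines, these lift to $n-1$ lines of $G$, and together with the new line we obtain at least $n$ lines of $G$.

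The remaining, and main, obstacle is when the induction hypothesis only tells us that $G'$ has a \emph{universal} line, leaving the bound of $n-1$ lines unavailable. Here I would show that $G$ itself has a universal line. A $\tau$-invariant universal line of $G'$ would already be universal in $G$, so we may assume the universal line of $G'$ is $\ov{vy}$ with $v$ as a defining endpoint. Let $a$ be a neighbour of $v$ on a shortest $v$--$y$ path; I claim $\ov{ay}=V(G)$. Both $v$ and $v'$ lie on $\ov{ay}$, since $a$ is adjacent to each and $d(v,a)+d(a,y)=d(v,y)$; and for any vertex $p$ beyond $y$, i.e. with $[vyp]$, the geodesics $v$--$y$ and $y$--$p$ concatenate through $a$, yielding $[ayp]$. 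The hard part will be the vertices lying strictly between $v$ and $y$: verifying $[apy]$ for these amounts to showing $a$ can be chosen on a shortest $v$--$p$ path simultaneously for all such $p$, which is where the distance-hereditary structure of the interval between $v$ and $y$ must be used, and where I expect the delicate case analysis (together with the parallel argument for false twins) to reside. Once this is established, $G$ has a universal line and the induction closes.
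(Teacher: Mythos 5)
Your induction framework is sound as far as it goes, and it parallels the machinery inside the paper's proof of its Claim 3: since $G'=G\sm v'$ is a connected induced (hence isometric) subgraph of the distance-hereditary graph $G$, betweenness is preserved, $\ov{xy}^{G'}=\ov{xy}^{G}\cap V(G')$, distinct lines of $G'$ lift to distinct lines of $G$, and your construction of one extra line through $v'$ avoiding $v$ (via the twin-swapping automorphism, using diameter at least $3$) is correct; so are the pendant case and the reduction to Chv\'atal's $2$-metric theorem. But the case you defer -- when the induction hypothesis yields only a universal line of $G'$, all of whose generating pairs contain $v$ -- is not a technical loose end: it is the entire content of the theorem, it is left unproven, and the specific claim you propose to close it with is false.

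Concretely, let $G'$ be the $4$-cycle $x a_1 b a_2$ with a pendant vertex $z$ attached to $b$; this is distance-hereditary and has diameter $3$. Then $\ov{xz}^{G'}=V(G')$ is universal, and the neighbours of $x$ on shortest $x$--$z$ paths are $a_1$ and $a_2$; but $\ov{a_1z}$ misses $a_2$ and $\ov{a_2z}$ misses $a_1$ (all three of $d(a_1,a_2),d(a_1,z),d(a_2,z)$ equal $2$, so no betweenness relation holds among $a_1,a_2,z$). So no choice of a neighbour $a$ of $v$ can work in general; here the universal lines avoiding $x$, such as $\ov{bz}$, are generated by vertices at distance $2$ from $x$, and any repaired statement would itself be a nontrivial lemma. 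Worse, in the genuinely hard case it is doubtful that $G$ has a universal line at all: if every universal pair of $G'$ contains $v$, then adding the twin kills each candidate, since a short computation shows $v'\in\ov{vt}^{G}$ only when $v,v'$ are false twins and $t\in N(v)$; the correct conclusion may then be ``at least $n$ lines'', for which your induction hypothesis (a universal line in $G'$, possibly very few lines) gives no counting leverage. Note that the paper's proof of its Claim 4 shows a minimal counterexample presents exactly this configuration at every twin pair, so it cannot be wished away. This is why the paper does not argue locally at one twin pair: it proves $G$ and $G-y$ are $2$-connected (Claims 1--2), shows the universal line of $G-y$ is an extension-free interval, i.e.\ $V(G)=\{y\}\cup I[x,z]$ with $x$ and $z$ non-adjacent (Claim 4), and then plays \emph{two disjoint} twin pairs against each other -- their existence guaranteed by (DH\ref{prop4}) in $2$-connected distance-hereditary graphs -- deriving $[xuz]$, $[xwz]$, $[uxw]$, $[uzw]$, building a cycle from four geodesics, and reaching a contradiction through the crossing-chords property (DH\ref{prop2}). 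That global argument is what your ``delicate case analysis'' placeholder would have to contain, and nothing in your proposal points toward it.
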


We hope that the structural techniques we use will shed light on a solution to the conjecture for all metric spaces induced by graphs.

\subsection*{Notation and preliminaries}

All graphs in this paper are simple and undirected. 
Let $G$ be a graph.
For a subset $S$ of $V(G)$, we let $G[S]$ denote the subgraph of $G$ induced by $S$, and $G - S = G[V(G)- S]$. 
Let $x$ be a vertex of $G$. 
We denote by $S_i(x)$  the set of vertices at distance $i$ from $x$.
We denote by $N(x)$ the \emph{neighbourhood} of $x$; that is, the set of vertices adjacent to $x$. We call two vertices $x$ and $y$   \emph{twins} if $N(x) - \{y\} = N(y) - \{x\}$. Note that twins may or may not be adjacent; if they are we call them \emph{true twins} and if not we call them \emph{false twins}.

Here are some properties of distance-hereditary graphs that we will use (see Howorka \cite{Howorka} and Bandelt and Mulder \cite{BandeltMulder}):

\begin{enumerate}[(DH1)]
\item \label{prop2} Any cycle of length at least 5 in a distance-hereditary graph has two crossing chords (\cite[Theorem 1]{Howorka}).
\item \label{prop3} If $x$ is a vertex in a distance-hereditary graph and $u$ and $v$ are adjacent vertices in $S_i(x)$, then $N_{S_{i-1}(x)}(u) = N_{S_{i-1}(x)}(v)$ (\cite[Theorem 3]{BandeltMulder}).
\item \label{prop4} Any $2$-connected distance-hereditary graph with at least four vertices has two disjoint pairs of twins (\cite[Corollary 1]{BandeltMulder}).
\end{enumerate}

For distinct vertices $x, y$ in a graph $G$, we denote by $Ext(x, y)$ (for \textit{extension of $x,y$}) the set $\{z : [xyz]\}$, by   $I(x, y)$ the set $\{z : [xzy]\}$, by $I[x, y)$ the set $\{x\} \cup \{z : [xzy]\}$ and by $I[x, y]$ the set $\{x,y\} \cup \{z : [xzy]\}$.
With this notation, if $x$ and $y$ are two vertices of a graph $G$, we have
\[
\ov{xy}= Ext(y,x) \cup I[x,y] \cup Ext(x,y).
\]

\section{The main result}

This section is devoted to the proof of Theorem \ref{main}. 
We start with two  lemmas used as tools in the main proof.
A \textit{triangle} is a graph made of three pairwise adjacent vertices.

\begin{lemma}\label{lem:triangle}
Let $G$ be a connected distance-hereditary graph and let $xy$ be an edge of $G$.
Then either the edge $xy$ is contained in a triangle, or $\ov{xy}$ is universal.
\end{lemma}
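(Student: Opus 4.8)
The plan is to reformulate the statement metrically and then argue by contradiction. Since $xy$ is an edge, $d(x,y)=1$, so $I(x,y)=\emptyset$ and $\ov{xy}=Ext(y,x)\cup\{x,y\}\cup Ext(x,y)$; because the triangle inequality gives $|d(x,z)-d(y,z)|\le d(x,y)=1$ for every $z$, a vertex $z$ lies outside $\ov{xy}$ precisely when $d(x,z)=d(y,z)$. Hence $\ov{xy}$ is universal if and only if no vertex is equidistant from $x$ and $y$, and it suffices to prove the contrapositive: if some vertex is equidistant from $x$ and $y$, then $xy$ lies in a triangle. I would prove this by choosing a vertex $z$ equidistant from $x$ and $y$ with $k:=d(x,z)=d(y,z)$ as small as possible, and showing that $k=1$, so that $z$ is a common neighbour of $x$ and $y$.

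Assume $k\ge 2$ and fix shortest paths $P=p_0\cdots p_k$ from $x=p_0$ to $z=p_k$ and $Q=q_0\cdots q_k$ from $y=q_0$ to $z=q_k$. The first observation is that $P$ and $Q$ must be internally disjoint: a common vertex $p_a=q_b$ would satisfy $k-a=d(p_a,z)=d(q_b,z)=k-b$, hence $a=b$, and then $p_a$ would be equidistant from $x$ and $y$ at distance $a$; by minimality of $k$ this forces $a=k$, i.e.\ $p_a=z$. Together with the edge $xy$, the two paths then form a cycle $C=p_0p_1\cdots p_k q_{k-1}\cdots q_0$ of odd length $2k+1\ge 5$, to which I would apply property (DH1).

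The heart of the argument is to control the chords of $C$. As $P$ and $Q$ are shortest paths, they are induced, so every chord joins some $p_a$ to some $q_b$; moreover $d(p_a,z)=k-a$ and $d(q_b,z)=k-b$, so adjacency forces $|a-b|\le 1$. Placing $p_a$ at cyclic position $a$ and $q_b$ at position $2k-b$, a short computation shows that two chords $p_{a_1}q_{b_1}$ and $p_{a_2}q_{b_2}$ cross exactly when $a_1<a_2$ and $b_1>b_2$; combined with $|a_i-b_i|\le 1$ this pins the two crossing chords guaranteed by (DH1) down to the rigid ``unit-shift'' form $p_iq_{i+1}$ and $p_{i+1}q_i$ for a single index $i$ with $0\le i\le k-2$. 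This is the step I expect to be the main obstacle: everything hinges on forcing the crossing chords into this position.

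To finish, I would examine the vertex $q_{i+1}$, which is adjacent to $p_i$ through one of the crossing chords. From $d(x,p_i)=i$ we get $d(x,q_{i+1})\le i+1$, and from $d(y,q_{i+1})=i+1$ together with the edge $xy$ we get $d(x,q_{i+1})\ge i$, so $d(x,q_{i+1})\in\{i,i+1\}$. The value $i$ is impossible: since $q_{i+1}$ lies on the shortest path $Q$, we have $d(q_{i+1},z)=k-i-1$, so $d(x,q_{i+1})=i$ would give $d(x,z)\le i+(k-i-1)=k-1<k$. Hence $d(x,q_{i+1})=i+1=d(y,q_{i+1})$, making $q_{i+1}$ equidistant from $x$ and $y$ at distance $i+1\le k-1<k$ and contradicting the minimality of $k$. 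Therefore $k=1$, so $z$ is a common neighbour of $x$ and $y$ and $xy$ lies in a triangle. Notably, this argument uses only property (DH1).
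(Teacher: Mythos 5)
Your proof is correct, but it takes a genuinely different route from the paper's. The paper's argument is three lines long: if some vertex $u$ lies outside $\ov{xy}$, then (as you also observe) $d(u,x)=d(u,y)=i$, so $x$ and $y$ are adjacent vertices in $S_i(u)$, each having a neighbour in $S_{i-1}(u)$; property (DH2) --- Bandelt and Mulder's result that adjacent vertices at the same level from $u$ have identical neighbourhoods in the previous level --- then immediately yields a common neighbour of $x$ and $y$, i.e.\ a triangle. You instead pick a vertex equidistant from $x$ and $y$ at minimal distance $k$, assume $k\ge 2$, and build the cycle of length $2k+1\ge 5$ from two shortest paths plus the edge $xy$ (your internal-disjointness argument via minimality of $k$ is sound), then invoke only (DH1), Howorka's crossing-chords property. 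Your chord analysis holds up: shortest paths are induced, so every chord is some $p_aq_b$; distances to $z$ force $|a-b|\le 1$; and the crossing condition pins the two chords to the form $p_iq_{i+1}$, $p_{i+1}q_i$ with $i\le k-2$, after which $q_{i+1}$ is an equidistant vertex at distance $i+1<k$, contradicting minimality. In effect you re-derive from (DH1) exactly the special case of (DH2) that the paper imports wholesale. What each buys: the paper's proof is essentially effortless given the Bandelt--Mulder machinery, while yours is longer but self-contained modulo (DH1) alone, which is Howorka's original characterization and the most primitive of the three cited properties; your argument would therefore survive in a treatment that takes the crossing-chords property as the definition of distance-hereditary graphs and imports nothing else.
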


\begin{proof}
Assume that there exists a vertex $u$ not in $ \ov{xy}$. 
Note that $|d_G(u,x) - d_G(u,y)| \leq 1$ since $xy$ is an edge; but this does not hold with equality for then $u$ would lie in $\ov{xy}$.
Thus there is an integer $i$ such that $x$ and $y$ are both in $S_i(u)$. 
Since $x$ and $y$ both have at least one neighbour in $S_{i-1}(u)$, it follows from Property (DH\ref{prop3}) that $xy$ is contained in a triangle.
\end{proof}

\begin{lemma}\label{lem:xaxb}
Let $G$ be a connected distance-hereditary graph and $x$, $a$, $b$ three distinct vertices of $G$. 
If $\ov{xa} = \ov{xb}$, then either $\ov{ab}$ is a universal line or $[axb]$.
\end{lemma}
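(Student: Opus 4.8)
The plan is to read the collinearity of $x$, $a$, $b$ off the hypothesis, reduce to a single configuration by symmetry, and then show that the common line is forced to be universal. First I would observe that $a\in\ov{xa}=\ov{xb}$ and $b\in\ov{xb}=\ov{xa}$; since $a\in\ov{xb}$ forces one of $[axb]$, $[xab]$, $[xba]$, the three vertices are collinear. If $[axb]$ holds we are done, so assume it does not. The hypothesis $\ov{xa}=\ov{xb}$ and the desired conclusion are both symmetric in $a$ and $b$, so I may assume $[xab]$, that is $d_G(x,b)=d_G(x,a)+d_G(a,b)$ with both summands at least $1$. It then suffices to show that every vertex $v$ lies on $\ov{ab}$.

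Fix $v$ and write $\alpha=d_G(v,x)$, $\beta=d_G(v,a)$, $\gamma=d_G(v,b)$, $p=d_G(x,a)$, $q=d_G(a,b)$, and split on whether $v\in\ov{xa}=\ov{xb}$. Suppose first that $v$ lies on the common line, so one betweenness among $[vxa],[xva],[xav]$ holds and one among $[vxb],[xvb],[xbv]$. I would run through these pairs; most combinations either directly place $v$ on $\ov{ab}$ or contradict the triangle inequality, using only metric transitivity of betweenness (for instance $[xva]$ together with $[xab]$ gives $[vab]$, and $[vxa]$ together with $[vxb]$ gives $[vab]$). The only surviving configuration is $[vxa]$ and $[xvb]$: here $v$ and $a$ both lie on shortest $x$--$b$ paths, yet $d_G(v,a)=d_G(v,x)+d_G(x,a)$, so the $v$--$a$ geodesic runs the long way back through $x$.

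This last configuration is where the distance-hereditary structure must enter, and it is the main obstacle. If $v$ and $a$ lay on a common shortest $x$--$b$ path then $d_G(v,a)=|\,p-\alpha\,|<\alpha+p$, contradicting $[vxa]$; so they lie on two distinct shortest $x$--$b$ paths, and, apart from the case $d_G(x,b)=2$ which is immediate, their union contains a cycle of length at least $5$. By (DH\ref{prop2}) this cycle has two crossing chords, and I would argue that such a chord yields a shortcut between the $v$--side and the $a$--side of the cycle, forcing $d_G(v,a)<\alpha+p$ and again contradicting $[vxa]$ (property (DH\ref{prop3}), applied to the layers $S_i(x)$, can be used to pin down where these chords can land). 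Hence the surviving configuration does not occur, and every $v$ on the common line lies on $\ov{ab}$.

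It remains to treat a vertex $v$ lying on \emph{neither} $\ov{xa}$ nor $\ov{xb}$; here all three triples $(\alpha,p,\beta)$, $(\alpha,p+q,\gamma)$, $(\beta,\gamma,q)$ would be strictly nondegenerate. The plan is to turn this into a contradiction with the equality $\ov{xa}=\ov{xb}$ itself: projecting $v$ onto a shortest $x$--$b$ path and using the convexity of such paths in distance-hereditary graphs (again via (DH\ref{prop2}) and (DH\ref{prop3})), I would locate a vertex lying on exactly one of $\ov{xa}$ and $\ov{xb}$, contradicting their being equal. I expect the crossing-chord analysis of the previous paragraph to be the delicate step, with this off-line case reducing to it once the projection is set up; the arithmetic of the betweenness case-split is routine by comparison.
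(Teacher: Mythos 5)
Your setup (collinearity of $x,a,b$, reduction to $[xab]$) and your nine-way betweenness analysis are correct, and you rightly isolate $[vxa]\wedge[xvb]$ as the only configuration that metric arithmetic alone cannot settle. The fatal problem is your treatment of that configuration: it cannot be refuted, because it actually occurs in connected distance-hereditary graphs with $v\notin\ov{ab}$. Let $G$ consist of two $4$-cycles $xcyd$ and $yebf$ sharing only the vertex $y$ (so $N(x)=\{c,d\}$, $N(y)=\{c,d,e,f\}$, $N(b)=\{e,f\}$); this $G$ is distance-hereditary, as one sees by deleting $d$ (a false twin of $c$), then the pendant vertices $x$ and $c$, then $f$ (a twin of $e$), and so on. Take $a=d$ and $v=c$. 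Then $d(x,v)=d(x,a)=1$, $d(v,a)=2$, $d(v,b)=d(a,b)=3$, $d(x,b)=4$, so $[xab]$, $[xvb]$, $[vxa]$ all hold, while $v\notin\ov{ab}$ (indeed $2+3\neq 3$, $3+3\neq 2$). In particular both specific claims in your crossing-chord step fail here: the union of the two geodesics $xcyeb$ and $xdyeb$ contains only a $4$-cycle even though $d(x,b)=4$ (two distinct geodesics can bound a $4$-cycle no matter how long they are), so (DH\ref{prop2}) gives nothing; and there is no shortcut, since $d(v,a)=\alpha+p=2$ holds exactly.

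The same example does more damage: in this $G$ we have $\ov{xa}=\ov{xb}=V(G)$ (all of $c,d,y,e,f$ lie in $I(x,b)$, and for $\ov{xa}$ note that $[cxa]$ holds and that $y,e,f,b\in Ext(x,a)$), yet $[axb]$ fails and $\ov{ab}=V(G)\setminus\{c\}$ is not universal. So the statement as literally written is false, and no completion of your pointwise plan --- or of any plan --- is possible. The lemma is true, and is only ever invoked in the paper, under the additional standing hypothesis that $G$ has no universal line, which excludes this example precisely because here the common line is itself universal. This is also where your strategy departs essentially from the paper's: rather than analyzing each vertex $v$ separately, the paper assumes $\ov{ab}$ is not universal, uses the distance-hereditary property to show that $I(a,b)$ separates $a$ from $b$, picks $u\in I(a,b)$ having a neighbour $v\notin I[a,b]$, and argues via the layers $S_k(x)$ that such a $v$ lies on exactly one of $\ov{xa}$, $\ov{xb}$. (Even there, the subcase $v\in S_{k-1}(x)$ overlooks the possibility $[vxa]$ when $d(x,a)=1$, which is exactly the configuration above, so the non-universality hypothesis is genuinely needed.) Your Case B is, in addition, only a sketch: ``projection'' and ``convexity'' in distance-hereditary graphs are never made precise, and it defers to the crossing-chord argument that, as shown, does not exist.
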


\begin{proof}
Assume by way of contradiction that $\ov{ab}$ is not universal and that, without loss of generality, $[xab]$.
Assume that $a\in S_i(x)$, $b \in S_j(x)$  (observe that, since $[xab]$, $i<j$). 
If $ab$ is an edge, then by Lemma \ref{lem:triangle}, it is contained in a triangle $G[\{a,b,c\}]$. 
If $c \in S_j(x)$, then $c \in\ov{xa}$ and $c \notin \ov{xb}$, a contradiction. 
If $c \in S_i(x)$, then $c \notin\ov{xa}$ and $c \in \ov{xb}$, a contradiction. 
Hence we may assume that $ab$ is not an edge.

Observe that, since $G$ is a distance-hereditary graph and $ab$ is not an edge, $G - I(a, b)$ has no path joining $a$ and $b$ (otherwise the distance between $a$ and $b$ in $G - I(a, b)$ is strictly greater than in $G$).
Therefore, as $\ov{ab}$ is not universal, there exists a vertex $u$ in $I(a, b)$ that has a neighbour $v$ not in $I[a,b]$. 
Assume $u \in S_k(x)$ where $i<k<j$.
If $v \in S_{k+1}(x)$,  then $v \in \ov{xa}$ and $v \notin \ov{xb}$, a contradiction.
If $v \in S_{k}(x)$, then by Property (DH\ref{prop3}),  $u$ and $v$ have a common neighbour in $I[a,b] \cap S_{k-1}(x)$ and hence $v \in \ov{xa}$ but $v \notin \ov{xb}$, a contradiction.
If $v \in S_{k-1}(x)$, then $v \notin \ov{xa}$ and $v \in \ov{xb}$, a contradiction.
\end{proof}

We will abuse terminology and say that a set $L \subseteq V(G)$ is a \emph{line of the graph $G$} if it is a line of the metric space induced by $G$. If $V(G)$ is a line of $G$, we will call it a \emph{universal line}.
When we are dealing with lines from more than one graph, we add a superscript and write $\ov{uv}^G$ to specify the line generated by the vertices $u$ and $v$ in the graph $G$.
We now prove our main result.

\vspace{2ex}

\noindent \textit{Proof of Theorem~\ref{main}:}

\vspace{2ex}

Let $G$ be a counterexample with the minimum number of vertices and set $n = |V(G)|$; so $G$ is a connected distance-hereditary graph with at least two vertices and $G$ has at most $n - 1$ lines but no universal line.
Note that by Lemma~\ref{lem:triangle}, every edge of $G$ is contained in a triangle.

\begin{claim} \label{clm:Gis2connected}
$G$ is $2$-connected.
\end{claim}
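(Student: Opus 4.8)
The plan is to rule out a cut vertex. Suppose for contradiction that $G$ is not $2$-connected. Since every edge of $G$ lies in a triangle, $G$ has no bridge; as $G$ is not $2$-connected it therefore has a cut vertex $v$. I would write $V(G)\sm\{v\}=V_1\sqcup V_2$ with no edges between $V_1$ and $V_2$, and set $G_i=G[V_i\cup\{v\}]$ for $i\in\{1,2\}$. Each $G_i$ is a connected induced subgraph of $G$, hence a connected distance-hereditary graph, with $2\le |V(G_i)|<n$ and $|V(G_1)|+|V(G_2)|=n+1$; moreover every edge of $G_i$ still lies in a triangle of $G_i$, since a common neighbour of two vertices of $V_i\cup\{v\}$ cannot lie across the cut. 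By minimality of $G$, each $G_i$ has a universal line or at least $|V(G_i)|$ lines.

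The engine of the argument is a transfer lemma comparing lines of $G$ with lines of $G_i$. Because $G$ is distance-hereditary, every geodesic between $V_1$ and $V_2$ runs through $v$; consequently $[avq]$ holds for all $a\in V_i\sm\{v\}$ and $q\in V_j$ (with $\{i,j\}=\{1,2\}$), and distances among vertices of $V(G_i)$ are the same in $G$ and in $G_i$. From this I would deduce, for $a,b\in V(G_i)$, that $\ov{ab}^G\cap V(G_i)=\ov{ab}^{G_i}$, while $\ov{ab}^G\cap V_j$ is all of $V_j$ when $v$ is an endpoint of $\ov{ab}^{G_i}$ or lies in one of its extension parts $Ext(a,b)\cup Ext(b,a)$, and is empty when $v\notin\ov{ab}^{G_i}$ or $v\in I(a,b)$. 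Thus each line of $G_i$ is of exactly one type: type $I$ ($v$ absent) and type $M$ ($v$ strictly interior) lift to subsets missing $V_j$, whereas type $E$ ($v$ an endpoint or in an extension) lift to sets containing all of $V_j$.

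Now I would count. Distinct lines of $G_i$ have distinct lifts. The type-$I$ and type-$M$ lifts from $G_1$ lie in $V_1\cup\{v\}$ and meet $V_1$, those from $G_2$ lie in $V_2\cup\{v\}$ and meet $V_2$, so these two families are disjoint; the type-$E$ lifts from $G_1$ contain $V_2$ and those from $G_2$ contain $V_1$, so a coincidence between a $G_1$-lift and a $G_2$-lift forces the common value $V(G)\sm\{v\}$, the only possible overlap. Two wins fall out immediately. First, if some $G_i$ has a universal line of type $E$ (in particular a universal line with $v$ as an endpoint), its lift is $V(G_i)\cup V_j=V(G)$, a universal line of $G$, contradiction; since a universal line of $G_i$ must contain $v$, we may henceforth assume every universal line of each $G_i$ has $v$ strictly interior (type $M$). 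Second, if neither $G_i$ has a universal line, each has at least $|V(G_i)|$ lines, and the lifts give at least $|V(G_1)|+|V(G_2)|-1=n$ distinct lines of $G$, contradiction.

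The remaining case is the main obstacle: some $G_i$, say $G_1$, has a universal line $\ov{ab}^{G_1}$ with $v\in I(a,b)$, whose lift is only $V(G_1)$, and $G_1$ may then have far fewer than $|V(G_1)|$ lines, so the naive bound $|V(G_1)|+|V(G_2)|-1$ is unavailable (a wheel on a $4$-cycle, split at its central vertex, is exactly such a configuration). Here I would harvest the cross-lines $\ov{aq}^G$ with $a\in V_1$ and $q\in V_2$: the transfer analysis shows each has the shape $A_a\cup\{v\}\cup B_q$, where $A_a\subseteq V_1$ is the near-side germ of $\ov{av}^{G_1}$ (depending only on $a$) and $B_q\subseteq V_2$ is the near-side germ of $\ov{vq}^{G_2}$ (depending only on $q$). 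The task is to show that these cross-lines, together with the type-$E$ lines through $v$, furnish enough additional distinct lines to reach $n$ and cannot all collapse. I expect this to require the crossing-chord property (DH\ref{prop2}) and the triangle condition (Lemma~\ref{lem:triangle}) to pin down the structure of $I(a,b)$ and of the germs $A_a$ and $B_q$, and this bookkeeping is where I anticipate the real difficulty.
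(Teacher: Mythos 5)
Your set-up and transfer lemma are sound: since $v$ is a cutvertex, each $G_i$ is an isometric (indeed distance-hereditary) induced subgraph, and your classification of lifts is correct --- a line $\ov{ab}^{G_i}$ lifts to $\ov{ab}^{G_i}\cup V_j$ when $v$ is an endpoint or in $Ext(a,b)\cup Ext(b,a)$, and to $\ov{ab}^{G_i}$ itself when $v\notin\ov{ab}^{G_i}$ or $v\in I(a,b)$. Your two easy cases also go through (a coincidence of type-$E$ lifts in fact forces a universal line of $G$, not just $V(G)\sm\{v\}$, but that only helps you). The problem is that the proof stops exactly where the difficulty is concentrated. When some $G_i$ has a universal line all of whose generating pairs have $v$ strictly interior --- and this genuinely happens, e.g.\ when $G_i$ is a wheel on a $4$-cycle with $v$ its hub --- the minimality of the counterexample tells you \emph{nothing} about how many lines $G_i$ has, so essentially all $n$ required lines must be produced by hand from the cross-lines $A_a\cup\{v\}\cup B_q$. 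You have not shown the germs $A_a$ (resp.\ $B_q$) cannot collapse onto few values, nor how the type-$E$ lines compensate if they do; you explicitly defer this ``bookkeeping,'' so the claim is not proved. This is a genuine gap, not a routine verification.

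It is worth noting that the paper's own proof avoids induction entirely, and the tool it uses is exactly what your missing case needs: Lemma~\ref{lem:xaxb}, applied to $G$ itself, converts the hypothesis that $G$ has no universal line directly into distinctness of lines. Concretely: with cutvertex $x$ and $G_1$ a smallest component of $G-x$, set $G_2=G-G_1$, so $|V(G_2)|\ge n/2$; by Lemma~\ref{lem:triangle} the edge from $x$ to a neighbour $u\in G_1$ lies in a triangle $\{x,u,v\}$ with $u,v\in V(G_1)$. For $a,b\in V(G_2)$ the relation $[aub]$ is impossible (any such path would cross $x$ twice), so Lemma~\ref{lem:xaxb} gives $\ov{ua}\neq\ov{ub}$, and likewise $\ov{va}\neq\ov{vb}$; moreover $d_G(u,c)=d_G(v,c)$ for all $c\in V(G_2)$, so no line $\ov{uc}$ contains $v$ while every $\ov{vc}$ does, making the two families disjoint. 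This yields $2|V(G_2)|\ge n$ distinct lines with no case analysis at all. If you want to salvage your inductive framework, your case 3 would essentially force you to rediscover this argument; it is cleaner to drop the induction and count directly.
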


\begin{proofclaim}
Assume that $G$ has a cutvertex, $x$. 
Let $G_1$ be a component of $G - x$ with $|V(G_1)|$ minimum.
Let $G_2 = G - G_1$ and let $n_2 = |V(G_2)|$. 
By the minimality of $V(G_1)$, $n_2 \ge n/2$.
Let $u$ be a neighbour of $x$ in $G_1$. 
Since $G$ has no universal line, by Lemma~\ref{lem:triangle} the edge $xu$ is contained in a triangle; call the third vertex of this triangle $v$. Note that $v \in V(G_1)$.

Let $a$ and $b$ be two vertices in $G_2$. 
It is clear that $[aub]$ cannot hold.
Since $G$ has no universal line, it follows from Lemma~\ref{lem:xaxb} that we have $\ov{ua} \neq \ov{ub}$.  
Similarly, $\ov{va} \neq \ov{vb}$.
Moreover, for any $c \in G_2$, $\ov{uc}$ does not contain $v$ and $\ov{vc}$ does not contain $u$, because $d_G(u, v) = 1$ and $d_G(u, c) = d_G(v, c)$. Hence $\ov{uc} \neq \ov{vc}$.
Therefore, $G$ has at least $2n_2 \ge n$ lines, a contradiction.
\end{proofclaim}

\begin{claim} \label{clm:twin2connected}
If $\{x, y\}$ is a pair of twins in $G$, then $G - y$ is $2$-connected.
\end{claim}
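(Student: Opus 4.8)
The plan is to argue by contradiction: assuming $G-y$ is not $2$-connected, I will show that $G$ must contain either a universal line or at least $n$ distinct lines, either of which contradicts the choice of $G$ as a minimal counterexample. First I would locate the cutvertex. Since $G$ is $2$-connected by Claim~\ref{clm:Gis2connected}, deleting the single vertex $y$ leaves a connected graph, so a failure of $2$-connectivity means $G-y$ has a cutvertex $z$; write $G-\{y,z\}=A\sqcup B$ with no edges between $A$ and $B$. The $2$-connectivity of $G$ forces $y$ to have a neighbour in each of $A$ and $B$ (otherwise $z$ alone would separate $G$). Now the twin hypothesis finishes the identification of $z$: if $x$ lay in $A$, then since $x$ and $y$ have the same neighbours outside $\{x,y\}$, the vertex $x$ would be adjacent to $y$'s neighbour in $B$, giving an edge between $A$ and $B$, a contradiction; symmetrically $x\notin B$. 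As $x\neq y$, we must have $z=x$, so $\{x,y\}$ is a $2$-cut of $G$ separating $A$ from $B$, with both $x$ and $y$ having neighbours on each side.

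Next I would record the metric consequences of this cut. Because $x$ and $y$ are twins we have $d_G(v,x)=d_G(v,y)$ for every vertex $v\neq x,y$, and every $A$--$B$ path meets $\{x,y\}$. From this one checks that an interior vertex of one side is never between two vertices of the other side: for any $a\in A$ and any distinct $c,c'\in B\cup\{x\}$ we have $\neg[cac']$, since routing a shortest $c$--$c'$ path through $a$ would be forced to use both $x$ and $y$ and so cost two extra edges. (The symmetric statement, that $x$ and $y$ lie between every pair $a\in A$, $b\in B$, also holds and gives useful context.)

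The heart of the argument is then a line count in the spirit of Claim~\ref{clm:Gis2connected}. Assume $|A|\le|B|$. I would like a triangle $\{x,a,t\}$ contained in $A\cup\{x\}$, i.e.\ two adjacent vertices $a,t\in A$ that are both adjacent to $x$. Given such a triangle, consider the two families $\{\ov{ac}:c\in B\cup\{x\}\}$ and $\{\ov{tc}:c\in B\cup\{x\}\}$. Within each family the lines are pairwise distinct: if $\ov{ac}=\ov{ac'}$ for distinct $c,c'\in B\cup\{x\}$, then Lemma~\ref{lem:xaxb} together with the absence of a universal line would give $[cac']$, which we just ruled out. The two families are disjoint as sets of lines: since $a\sim t$ and $d_G(a,c)=d_G(t,c)$ for every $c\in B\cup\{x\}$, a direct check of the three betweenness relations shows $t\notin\ov{ac}$ and $a\notin\ov{tc}$, so every $a$-line contains $a$ but not $t$ while every $t$-line contains $t$ but not $a$. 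This produces $2(|B|+1)\ge(|A|+|B|)+2=n$ distinct lines, the desired contradiction.

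It remains to produce the triangle $\{x,a,t\}$ on the smaller side. Choosing a neighbour $a\in A$ of $x$, Lemma~\ref{lem:triangle} gives a triangle on the edge $xa$ whose third vertex $t$ is adjacent to both $x$ and $a$; as there are no edges between $A$ and $B$, either $t\in A$ (and we are done) or $t=y$. When $x$ and $y$ are \emph{false} twins the option $t=y$ is impossible, since $y\not\sim x$, so the counting argument always applies. The case $t=y$ can occur only for \emph{true} twins, and this degenerate regime is where I expect the main difficulty. Here I would instead exhibit a universal line directly, taking a diametral pair $(p,q)$ across the cut with its deep endpoint on the small side and showing every vertex lies between $p$ and $q$, so that $\ov{pq}$ is universal. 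The delicate point — and the step I expect to be the real obstacle — is controlling the shortest-path structure of the small side in this degenerate case, in particular ruling out two incomparable ``deep'' vertices that would prevent any single line from being universal; this is exactly where I would invoke the distance-hereditary Properties (DH\ref{prop2})--(DH\ref{prop3}).
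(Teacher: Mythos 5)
Your cut-identification (the cutvertex of $G-y$ must be $x$, so $\{x,y\}$ is a $2$-cut) and your false-twin case are correct and essentially identical to the paper's argument: the paper also obtains two adjacent neighbours of $x$ on the small side from Lemma~\ref{lem:triangle} and counts the same $2(|B|+1)\ge n$ lines via Lemma~\ref{lem:xaxb}. The genuine gap is the true-twin case, which you leave open, and the fallback you propose there --- exhibiting a universal line --- cannot work, because graphs in exactly that degenerate regime need not have one. Take $G$ to be the join of the edge $xy$ with an independent set $\{a_1,a_2,b_1,b_2\}$: this graph is distance-hereditary and $2$-connected, $x$ and $y$ are adjacent true twins, $G-y$ has cutvertex $x$, and every component of $G-\{x,y\}$ is a singleton, so no triangle $\{x,a,t\}$ with $a,t$ on one side exists. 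Yet $G$ has no universal line: one checks $\ov{xy}=\{x,y\}$, $\ov{xc}=V(G)-\{y\}$ and $\ov{yc}=V(G)-\{x\}$ for $c\notin\{x,y\}$, and $\ov{cc'}=\{c,c',x,y\}$ for distinct $c,c'\notin\{x,y\}$. So in the degenerate case the contradiction must come from a line count, and no amount of (DH\ref{prop2})--(DH\ref{prop3}) analysis will produce a universal line that does not exist.

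The paper's resolution is to drop the adjacency requirement when $x\sim y$ and count asymmetrically. Take any two neighbours $u,v$ of $x$ in the smallest component $G_1$ of $G-\{x,y\}$ (two exist by $2$-connectivity once $|V(G_1)|\ge 2$; the case $|V(G_1)|=1$ gets a separate short count using the line $\ov{xy}$). The $n_2$ lines $\ov{uc}$, $c\in V(G_2)$, are pairwise distinct by Lemma~\ref{lem:xaxb}, because $[aub]$ fails for all distinct $a,b\in V(G_2)$ --- including the pair $\{x,y\}$, precisely because $xy$ is an edge (this is the step that breaks for false twins, where $[xuy]$ holds, which is why $y$ is excluded in that case). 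The $n_2-2$ lines $\ov{vc}$, $c\in V(G_2)-\{x,y\}$, do not contain $u$, while every $\ov{uc}$ does, so the two families are disjoint even though $v$ may lie on $\ov{ux}$ when $u\not\sim v$: indeed $[uxv]$ holds exactly when $d_G(u,v)=2$, which is the failure your symmetric count $\{\ov{ac}\}\cup\{\ov{tc}\}$ over $c\in B\cup\{x\}$ would hit at $c=x$. The asymmetric count gives $n_2+(n_2-2)\ge n$ lines and closes the case your proposal leaves open.
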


\begin{proofclaim}
Assume that $G - y$ has a cutvertex, $t$.
If $t \neq x$, then $t$ is also a cutvertex of $G$, contradicting Claim~\ref{clm:Gis2connected}. 
So $x$ is a cutvertex of $G - y$ and thus $\{x, y\}$ is a $2$-vertex cutset of $G$. 
Let $G_1$ be a component of $G - \{x, y\}$ with $|V(G_1)|$ minimum, and let $G_2 = G - G_1$. 
By the minimality of $G_1$, $n_2 = |V(G_2)| \ge (n-2)/2 + 2 = (n+2) / 2$.

Suppose that $|V(G_1)| = 1$. Let $u$ be the unique vertex of $G_1$. Since every edge of $G$ is contained in a triangle, $x$ and $y$ are adjacent. 
Observe that for any distinct $a, b \in V(G_2)$, $[aub]$ does not hold, so $\ov{ua} \neq \ov{ub}$. 
This gives us $n-1$ distinct lines. 
Moreover, the line $\ov{xy}$ does not contain $u$ so it is distinct from $\ov{ua}$ for all $a \in V(G_2)$. 
Hence $G$ has at least $n$ distinct lines, a contradiction.
We may therefore assume that $|V(G_1)| \geq 2$.
Since $G$ is $2$-connected, $x$ and $y$ have at least two neighbours in $G_1$; let $u$ and $v$ be two of these neighbours.
If $x$ and $y$ are false twins then, since every edge is contained in a triangle, we can (and we do) choose $u$ and $v$ adjacent.

Assume first that $x$ and $y$ are true twins, that is, $xy$ is an edge.
Then it is easy to see that for any distinct vertices $a, b \in V(G_2)$, neither $[aub]$ nor $[avb]$ can hold.
Hence, by Lemma~\ref{lem:xaxb}, $\ov{ua} \neq \ov{ub}$ and $\ov{va} \neq \ov{vb}$.
Moreover, for any vertex $c \in V(G_2) - \{x, y\}$, $v \notin \ov{uc}$ and $u \notin \ov{vc}$.
Hence, we have the following set of distinct lines: $\{\ov{uc} : c \in V(G_2)\} \cup \{\ov{vc} : c \in V(G_2) - \{x, y\}\}$.
This gives at least $n_2 + n_2 - 2 \geq n$ distinct lines, a contradiction.

So we may now assume that $x$ and $y$ are false twins, so $xy$ is not an edge and $uv$ is an edge.
For any distinct $a, b \in V(G_2) - \{y\}$, we have $\ov{ua} \neq \ov{ub}$ and $\ov{va} \neq \ov{vb}$ by Lemma~\ref{lem:xaxb}. 
Moreover, for any $c \in V(G_2)$,  since $v \notin \ov{uc}$ and $u \notin \ov{vc}$, we have $\ov{uc} \neq \ov{vc}$.
Hence we have the following set of distinct lines: $\{\ov{uc} : c \in V(G_2) - \{y\}\} \cup \{\ov{vc} : c \in V(G_2) - \{y\}\}$. This gives  $2(n_2 - 1) \geq n$ distinct lines, a contradiction.
\end{proofclaim}

\begin{claim}\label{ulinG-y}
If $\{x,y\}$ is a pair of twins, then $G-y$ has a universal line.
\end{claim}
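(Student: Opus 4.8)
The plan is to show that $G-y$ (which is $2$-connected by Claim~\ref{clm:twin2connected}) is itself a smaller connected distance-hereditary graph, and then invoke the minimality of $G$ as a counterexample. Since $G$ has at least two vertices and contains the twin pair $\{x,y\}$, it has at least $n\ge 2$ vertices with $n\ge 3$; hence $G-y$ has $n-1\ge 2$ vertices. The key observation is that the class of distance-hereditary graphs is hereditary under taking connected induced subgraphs—indeed, if $H$ is an induced subgraph of a distance-hereditary graph $G$ and $H'$ is a connected induced subgraph of $H$, then $H'$ is also a connected induced subgraph of $G$, so distances agree with those in $G$ and hence with those in $H$. Therefore $G-y$ is connected and distance-hereditary on fewer vertices than $G$.

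By the minimality of $G$, the graph $G-y$ is \emph{not} a counterexample to the theorem: it either has at least $n-1$ distinct lines or has a universal line. So it suffices to rule out the former possibility by showing $G-y$ has at most $n-2$ lines. Here I would exploit the twin relationship heavily. The idea is to set up an injection from the lines of $G-y$ into the lines of $G$ that misses at least two lines of $G$ (so that if $G-y$ had $n-1$ lines, $G$ would have $n$, contradicting that $G$ is a counterexample), or else to argue directly about how removing one twin affects the line structure. Since $x$ and $y$ are twins, they play symmetric roles in $G$: for any two vertices $p,q$ of $G-y$, the betweenness relations among $p,q$ and a third vertex in $G-y$ are essentially inherited from $G$, because distances in $G-y$ equal distances in $G$. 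This should let me relate $\ov{pq}^{G-y}$ to $\ov{pq}^G\cap V(G-y)$.

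\textbf{The main obstacle} I expect is controlling precisely how a line of $G$ restricts to $G-y$ and ensuring the count comes out as claimed: a line $\ov{pq}^G$ restricted to $V(G-y)$ equals $\ov{pq}^{G-y}$ because betweenness is determined by distances and distances are preserved, but two \emph{distinct} lines of $G$ could collapse to the same line of $G-y$ only through the deleted vertex $y$, and conversely distinct lines of $G-y$ lift back to distinct lines of $G$. The delicate point is to account for the lines of $G$ that contain $y$, and to use the twin structure to show that the map $\ov{pq}^{G-y}\mapsto \ov{pq}^{G}$ is injective while its image omits enough lines of $G$. Concretely, I would argue that if $G-y$ had $n-1$ or more distinct lines, then pulling these back to $G$ and adjoining a line through $y$ that is not among them (for instance a line involving the twin pair, built using the fact that $G$ has no universal line together with Lemma~\ref{lem:xaxb}) would furnish $n$ distinct lines of $G$, contradicting that $G$ is a counterexample. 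Once $G-y$ is shown to have fewer than $n-1$ lines, minimality forces $G-y$ to have a universal line, which is exactly the claim.
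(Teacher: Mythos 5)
Your setup matches the paper's: by Claim~\ref{clm:twin2connected} and the minimality of $G$, the graph $G-y$ either has a universal line or at least $n-1$ lines, and since $G-y$ is connected and $G$ is distance-hereditary, distances (hence betweenness, hence lines) in $G-y$ are restrictions of those in $G$, so that $\ov{st}^{G-y} = \ov{st}^G \cap (V(G)-\{y\})$ and distinct lines of $G-y$ force at least as many distinct lines of $G$ among those generated by pairs avoiding $y$. This part is sound, up to a minor point: your map $\ov{pq}^{G-y}\mapsto\ov{pq}^G$ is not well defined on lines (a single line of $G-y$ can be generated by several pairs whose $G$-lines differ in whether they contain $y$), so one should count the set $\{\ov{st}^G : s,t \in V(G)-\{y\}\}$ directly, as the paper does; this only helps the count.

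The genuine gap is the final, crucial step: actually producing a line of $G$ outside that set. You defer it to ``a line involving the twin pair, built using the fact that $G$ has no universal line together with Lemma~\ref{lem:xaxb}'', but this is exactly the nontrivial content of the claim, and Lemma~\ref{lem:xaxb} is not the relevant tool (the paper never uses it here). What is needed is, first, the twin-based dichotomy: for $s,t \neq y$, the line $\ov{st}^G$ either contains both $x$ and $y$ or does not contain $y$ at all (this uses that $x$ and $y$ have identical distances to all other vertices), so that when $G-y$ has $\geq n-1$ lines, these exhaust all lines of $G$ and \emph{no} line of $G$ can contain $y$ but not $x$. Second, a case analysis to contradict this: if $x$ and $y$ are adjacent, then $\ov{yt}^G$ for any $t\neq x,y$ contains $y$ but not $x$; if they are non-adjacent and some $t$ is non-adjacent to $x$, then $\ov{yt}^G$ again works; but in the remaining case ($x\not\sim y$ and every other vertex adjacent to both) no such line exists at all -- instead one shows $\ov{xy}^G$ is a universal line of $G$, contradicting that $G$ is a counterexample by a different route. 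Your sketch implicitly assumes a suitable line through $y$ always exists; the third case shows it need not, and handling it is where the hypothesis that $G$ has no universal line genuinely enters. Without this dichotomy and case analysis, the proof is incomplete.
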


\begin{proofclaim}
By Claim~\ref{clm:twin2connected}, $G - \{x, y\}$ is connected.
By the minimality of $G$, $G-y$ either has at least $n-1$ lines or has a universal line. 
Assume by way of contradiction that $G - y$ has at least $n - 1$ lines.
By definition of distance-hereditary graphs and because $x$ and $y$ are twins, it is easy to see that, for any two vertices $s,t$ in $ V(G) - \{y\}$, the following holds:
\begin{itemize}
\item if $x \in \{s, t\}$, then either $\ov{st}^G = \ov{st}^{G - y}$ or $\ov{st}^G = \ov{st}^{G - y} \cup \{y\}$,
\item if $x \notin \{s, t\}$ and $x \in \ov{st}^{G - y}$, then $\ov{st}^G = \ov{st}^{G - y} \cup \{y\}$,
\item if $x \notin \{s, t\}$ and $x \notin \ov {st}^{G-y}$, then $\ov{st}^G = \ov{st}^{G - y}$.
\end{itemize}

So the set $\{\ov{st}^G : s, t \in V(G) - \{y\}\}$ contains at least $n - 1$ distinct lines of $G$, and each of them either contains both $x$ and $y$, or does not contain $y$.
Therefore, no line of $G$ contains $y$ but not $x$ because $G$ has at most $n-1$ lines.

If $x$ and $y$ are adjacent, then for every $t$ in $V(G) - \{x, y\}$,   $\ov{yt}^G$ contains  $y$ but not $x$, a contradiction. 
If there is a vertex $t$ in $V(G) - \{x, y\}$ that is not adjacent to $x$, then $\ov{yt}^G$  contains $y$ but not $x$, a contradiction. 
Thus $x$ and $y$ are not adjacent and every other vertex of $G$ is adjacent to both $x$ and $y$. 
Then $\ov{xy}^G$ is a universal line of $G$, a contradiction.
\end{proofclaim}

\begin{claim} \label{last}
 If $\{x, y\}$ is a pair of twins in $G$, then there is a vertex $z$ in $V(G) - \{x, y\}$ that is not adjacent to $x$ and such that $V(G) = \{y\} \cup I[x, z]$.
\end{claim}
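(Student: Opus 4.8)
The plan is to extract the interval structure from the universal line of $G-y$ furnished by Claim~\ref{ulinG-y}, using the twin pair $\{x,y\}$ to force $x$ to be an endpoint of that line. By Claim~\ref{ulinG-y}, $G-y$ has a universal line $\ov{uv}^{G-y}=V(G)\sm\{y\}$. First I would show $x\in\{u,v\}$: since $x$ belongs to this universal line, the trichotomy established in the proof of Claim~\ref{ulinG-y} shows that if $x\notin\{u,v\}$ then $\ov{uv}^{G}=\ov{uv}^{G-y}\cup\{y\}=V(G)$, a universal line of $G$, contradicting the choice of $G$. So, after renaming, $\ov{xv}^{G-y}=V(G)\sm\{y\}$, and the same trichotomy applied with $x$ as an endpoint gives $\ov{xv}^{G}=V(G)\sm\{y\}$, so that $y\notin\ov{xv}^{G}$.

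Next I would choose $z$ to be a vertex of $V(G)\sm\{x,y\}$ at maximum distance from $x$. Because $x$ and $y$ are twins, $d_G(y,z)=d_G(x,z)$ and $d_G(x,y)\ge 1$, whence $d_G(x,y)+d_G(y,z)>d_G(x,z)$; thus $y\notin I[x,z]$ and the goal $V(G)=\{y\}\cup I[x,z]$ is equivalent to $I[x,z]=V(G)\sm\{y\}$. To get $z\not\sim x$ it suffices to rule out the case in which every vertex of $V(G)\sm\{y\}$ is adjacent to $x$. In the false-twin case this would make every vertex of $V(G)\sm\{x,y\}$ a common neighbour of the nonadjacent pair $x,y$, so that $\ov{xy}^{G}$ is universal, a contradiction; in the true-twin case a direct computation of $\ov{xv}^{G-y}$ (all distances from $x$ in $G-y$ being $1$) shows that universality would force $v$ to have degree $1$ in $G-y$, contradicting the $2$-connectivity of $G-y$ proved in Claim~\ref{clm:twin2connected}.

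The heart of the argument is then to prove $I[x,z]=V(G)\sm\{y\}$, that is, that $[xwz]$ holds for every $w\in V(G)\sm\{x,y,z\}$. Writing $\ov{xv}^{G}=Ext(v,x)\cup I[x,v]\cup Ext(x,v)$, I would first use the elementary metric transitivity $[xwv]\wedge[xvz]\Rightarrow[xwz]$ (which holds in any metric space) to absorb the whole segment $I[x,v]$ at once, and then eliminate the two possible obstructions. A vertex $w$ lying beyond $x$, that is $w\in Ext(v,x)$ with $[vxw]$, also satisfies $[vyw]$ by the twin relation, so that both $x$ and $y$ sit on geodesics from $v$ to $w$; I would argue that this makes $\ov{wv}^{G}$ universal in $G$, a contradiction, whence $Ext(v,x)=\emptyset$. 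The remaining and decisive obstruction is branching on the far side of the line, namely two vertices at maximum distance from $x$ that are incomparable under betweenness. Ruling this out is where distance-heredity is essential, via Properties (DH\ref{prop2}) and (DH\ref{prop3}) applied to two diverging shortest paths out of $x$, once more combined with the twin $y$ to manufacture a universal line of $G$ whenever branching occurs. I expect this last point — upgrading a universal line of $G-y$ to a genuine geodesic interval $I[x,z]$ by excluding such branching — to be the main obstacle.
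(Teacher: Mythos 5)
Your opening step is sound and matches the paper: the universal line of $G-y$ supplied by Claim~\ref{ulinG-y} must have $x$ as an endpoint, since otherwise the trichotomy from the proof of Claim~\ref{ulinG-y} would upgrade it to a universal line of $G$; your handling of the degenerate case where every vertex of $V(G)\sm\{x,y\}$ is adjacent to $x$ also works. But the part you yourself call the heart of the argument has two genuine gaps, one of which you acknowledge. First, to get $Ext(v,x)=\emptyset$ you assert that $[vxw]$ together with $[vyw]$ ``makes $\ov{wv}^G$ universal.'' This does not follow: those two relations only tell you that $x$ and $y$ lie in $\ov{vw}^G$, and no lemma at your disposal converts ``two twins lie in $I(v,w)$'' into universality. (For instance, for a vertex $p$ with $[xvp]$, none of $[pvw]$, $[vpw]$, $[vwp]$ is forced by the available equalities, so such a $p$ need not lie in $\ov{vw}^G$.) Note also that your transitivity step absorbing $I[x,v]$ silently assumes $[xvz]$, which itself requires first knowing that your farthest vertex $z$ is not in $Ext(v,x)$. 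Second, the step you call the decisive obstruction --- showing that all vertices at or beyond $v$ are comparable under betweenness, so that $I[x,z]$ really covers $V(G)\sm\{y\}$ --- is exactly the hard content of the claim, and you leave it as a plan (``apply (DH\ref{prop2}) and (DH\ref{prop3}) to diverging shortest paths to manufacture a universal line''), not a proof.

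The paper's route avoids both problems by taking $z$ to be the second endpoint of the universal line itself (your $v$), so that $V(G)\sm\{y\}=Ext(z,x)\cup I[x,z]\cup Ext(x,z)$ holds by definition and the whole task reduces to proving $x\not\sim z$ and $Ext(x,z)=Ext(z,x)=\emptyset$. For $x\not\sim z$ it uses a counting argument, not a universal line: Lemma~\ref{lem:xaxb} together with the twin $y$ produces $n$ distinct lines, contradicting the minimality of the counterexample; your proposal has no counting mechanism that could replace this. For the emptiness of the extensions, short metric computations show that no edge joins $Ext(z,x)$ to $I(x,z)\cup\{z\}$ nor to $Ext(x,z)$, and symmetrically; then a nonempty $Ext(x,z)$ would make $z$ a cutvertex of $G$, and a nonempty $Ext(z,x)$ would make $x$ a cutvertex of $G-y$, contradicting Claims~\ref{clm:Gis2connected} and~\ref{clm:twin2connected}. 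This cutvertex/$2$-connectivity mechanism is the idea your proposal is missing; without it, or a worked-out substitute for your two unproven steps, the proposal does not establish the claim.
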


\begin{proofclaim}
Assume there exists two vertices $s$ and $t$ in $V(G)-\{x,y\}$ such that $\ov{st}^{G-y}$ is a universal line of $G-y$.
Since   $x \in \ov{st}^{G - y}$,    $y \in \ov{st}^G$ and thus $\ov{st}^G$ is a universal line of $G$, a contradiction. 
Since $G-y$ has a universal line by Claim~\ref{ulinG-y}, it follows that there exists a vertex $z \in V(G)-\{x,y\}$ such that $\ov{xz}^{G-y}$ is a universal line of $G-y$.
Thus $\ov{xz}=Ext(z,x) \cup I[x,z] \cup Ext(x,z)=V(G)-y$ so, to prove the claim, it suffices to show that $x$ and $z$ are not adjacent and $Ext(x,z)=Ext(z,x)=\emptyset$.

Suppose that $x$ and $z$ are adjacent. Then $y$ and $z$ are also adjacent, and as $y \notin \ov{xz}^G$, $xy$ is an edge. Also, $I(x, z) = \emptyset$, and thus $ Ext(z, x) \cup Ext(x, z)=V(G) - \{x, y, z\} $.
For any distinct vertices $a_1, a_2 \in Ext(z, x)$, $[a_1za_2]$ does not hold so, by Lemma~\ref{lem:xaxb}, $\ov{za_1} \neq \ov{za_2}$.
Similarly, for any distinct vertices $b_1, b_2 \in Ext(x, z) \cup \{z\}$, we have $\ov{xb_1} \neq \ov{xb_2}$.
Moreover, for any $a \in Ext(z, x)$ and $b \in Ext(x, z) \cup \{z\}$ we have $\ov{za} \neq \ov{xb}$ because $y \in \ov{za}$ but $y \not\in \ov{xb}$. 
Thus $\{ \ov{za} : a \in Ext(z, x)\} \cup \{\ov{xb} : b \in Ext(x, z) \cup \{z\}\}$ is a set of $n-2$ distinct lines of $G$.
We then observe that $\ov{xy}$ and $\ov{yz}$ are two lines that are distinct from all of these, because $\ov{xy}$ does not contain $z$ and $\ov{yz}$ does not contain $x$. Then $G$ has at least $n$ lines. So $x$ and $z$ are not adjacent.

There is no edge with one endpoint in $I(x, z) \cup \{x\}$ and the other one in $Ext(x, z)$ and, similarly, there is  no edge with one endpoint in $I(x, z) \cup \{z\}$ and the other one in  $Ext(z, x)$. 
Suppose there is an edge $ab$ with $a \in Ext(z, x)$ and $b \in Ext(x, z)$. 
We have $[zxa]$ and $[xzb]$ so 
\[ d_G(a, z) = d_G(a, x) + d_G(x, z) \leq 1 + d_G(b, z) \]
 and 
\[ d_G(b, x) = d_G(b, z) + d_G(z, x) \leq 1 + d_G(a, x). \]
 Hence  $d_G(a, x) + d_G(b, z) + 2d_G(x, z) \leq 2 + d_G(a, x)+d_G(b, z)$, implying that $d_G(x, z) = 1$, a contradiction.
So there is no edge with one endpoint in $Ext(z, x)$ and the other one in  $Ext(x, z)$.

So $Ext(x, z)$ is empty, for otherwise $z$ would be a cutvertex of $G$, and   $Ext(z, x)$ is empty, otherwise $x$ would be a cutvertex of $G - y$.
\end{proofclaim}

We have now proved enough claims to finish the proof.
It is easy to  check that $n \ge 4$.
By Claim~\ref{clm:Gis2connected}, $G$ is 2-connected, so by Property (DH\ref{prop4}), $G$ has two disjoint pairs of twins $\{x, y\}$ and $\{u, v\}$. 
By Claim~\ref{last} and because $x$ and $y$ are twins, there is a vertex $z \in V(G) - \{x, y\}$ that is not adjacent to $x$ such that $V(G) = \{y\} \cup I[x, z]$. 
Similarly, there is a vertex $w \in V(G) - \{u, v\}$ that is not adjacent to $u$ such that $V(G) = \{v\} \cup I[u, w]$.

Observe that $u \neq z$, otherwise we would have $[xvu]$, contradicting the fact that $u$ and $v$ are twins (similarly $v \neq z$). So we have $[xuz]$.
Similarly, we have $x \neq w$ and $[uxw]$ (also $y \neq w$).

Moreover, since $[xuz]$ and $[uxw]$, we have $w \neq z$. 
Hence the six elements of $\{x, y, z, u, v, w\}$ are pairwise distinct. 
Thus we have $w,u \in I(x, z)$ and  $z,x \in I(u, w)$  i.e.~$[xwz]$,  $[xuz]$, $[uzw]$, and $[uxw]$.
We are now going to show that these four properties cannot all hold together.

Let $P_{xu}$, $P_{uz}$, $P_{zw}$ and $P_{wx}$ be shortest paths joining the pairs $\{x, u\}, \{u, z\}, \{z, w\}$ and $\{w, x\}$, respectively. 
Observe that, since $[xuz]$, $P_{xu} \cup P_{uz}$ is a shortest path from $x$ to $z$ (going through $u$), and similarly $P_{uz} \cup P_{zw}$, $P_{zw} \cup P_{wx}$ and $P_{wx} \cup P_{xu}$ are shortest paths from respectively $u$ to $w$ (going through $z$), $z$ to $x$ (going through $w$) and $w$ to $u$ (going through $x$).

We claim that $P_{xu}$ and $P_{zw}$ are disjoint; indeed, if they are not, then there is a shortest path from $z$ to $w$ going through $u$, i.e.~$[zuw]$ contradicting the fact that $[uzw]$.
Similarly, $P_{uz}$ and $P_{wx}$ are disjoint. 
Hence, the unions of the paths $P_{xu}, P_{uz}, P_{zw}$ and $P_{wx}$ forms a cycle, $C$.

If $|E(C)| = 4$, then $d_G(x, z) = 2$ and the fact that $V(G) - \{y\} \subseteq I[x, z]$ means that $G - y$ has diameter two. As $x$ and $y$ are twins in $G$, $G$ also has diameter two which contradicts the result cited in the introduction stating that any $2$-metric space on $n \ge 2$ points either has at least $n$ distinct lines, or has a universal line \cite{Chvatal}. 
So $|E(C)| \geq 5$.

Now by Property (DH\ref{prop2}) applied to the distance-hereditary graph $G - \{y, v\}$, the cycle $C$ has two crossing chords, $e$ and $f$. 
Let $e_1$ and $e_2$ be the extremities of $e$ and $f_1$ and $f_2$ the extremities of $f$.
Note that no chord exists with both edges in $P_{xu} \cup P_{wx}$, both edges in $P_{uz} \cup P_{zw}$, both edges in $P_{xu} \cup P_{uz}$, or both edges in $P_{zw} \cup P_{wx}$.
Hence each of $e$ and $f$ either joins the interiors of $P_{uz}$ and $P_{wx}$ or those of $P_{xu}$ and $P_{zw}$. 

First, we suppose that $e_1, e_2, f_1,$ and $f_2$ respectively lie in the interiors of the paths $P_{uz}$, $P_{wx}$, $P_{xu}$ and $P_{zw}$.
Then we have
\[ d_G(x, e_2) + 1 + d_G(e_1, z) \geq d_G(x,z) = d_G(x, e_2) + d_G(e_2, w) + d_G(w, z) \]
so that $d_G(u, z) \geq 1 + d_G(e_1, z) > d_G(w, z)$.
We also have
\[ d_G(x, f_1) + 1 + d_G(f_2, z) \geq d_G(x,z)=d_G(x, f_1) + d_G(f_1, u) + d_G(u, z) \]
so that $d_G(w, z) \geq 1 + d_G(f_2, z) > d_G(u, z)$, a contradiction.

Thus we may assume by symmetry that $e_1$ and $f_1$ both lie in the interior of $P_{uz}$ and $e_2$ and $f_2$ both lie in the interior of $P_{wx}$. We may assume by symmetry that $[e_1f_1z]$ and $[e_2f_2x]$.
Now we have 
\[ d_G(x, f_2) + 1 + d_G(f_1, z) \geq d_G(x,z)= d_G(x, f_2) + d_G(f_2, w) + d_G(w, z) \]
so that $d_G(e_1, z) \geq 1 + d_G(f_1, z) > d_G(f_2, w)$.
We also have
\[ d_G(u, e_1) + 1 + d_G(e_2, w) \geq d_G(u,w)=d_G(u, e_1) + d_G(e_1, z) + d_G(z, w) \]
so that $d_G(f_2, w) \geq 1 + d_G(e_2, w) > d_G(e_1, z)$, a contradiction. \hfill{$\Box$}

\vspace{2ex}

Although our proof finds $n$ lines in the metric space induced by an $n$-vertex distance-hereditary graph, it seems likely that this is not the best possible lower bound. In fact, Va\v{s}ek Chv\'atal has asked whether the following conjecture is true for all graphs:

\begin{conjecture}
The metric space induced by any connected graph on $n$ vertices ($n \geq 2$) either has a universal line or has $\Omega(n^{4/3})$ lines.
\end{conjecture}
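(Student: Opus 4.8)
The plan is to attack the conjecture through a minimal counterexample, combining a crude double-counting bound with an incidence-geometric estimate tailored to betweenness in graph metrics. Fix a connected graph $G$ on $n$ vertices with no universal line, let $\mathcal L$ be its set of lines, and put $t = |\mathcal L|$. Two quantities govern the count: the incidence number $I = \sum_{L \in \mathcal L} |L|$ and the maximum size $k = \max_{L \in \mathcal L} |L|$ of a line. The first move is the dichotomy ``either some line is large or all lines are small'', because these two regimes call for opposite tools. The guiding heuristic --- and the reason one may hope to beat the Euclidean lower bound of $n$ lines --- is that graph metrics strongly resist near-pencils: unlike in the plane, it seems hard for a single line to absorb almost all vertices without becoming universal, so large lines ought to be both rare and highly structured.

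For the ``all lines small'' regime the basic estimate is elementary and holds for every graph. For a fixed vertex $p$, consider the map $q \mapsto \ov{pq}$ on $V(G) \sm \{p\}$; since $q' \in \ov{pq'}$ always, any fibre of this map is contained in a single line, hence has size at most $k$. This yields at least $(n-1)/k$ distinct lines through every vertex, so that $I \geq n(n-1)/k$, and since $I \leq tk$ we get
\[ t \;\geq\; \frac{I}{k} \;\geq\; \frac{n(n-1)}{k^{2}}. \]
Thus if every line has at most $O(n^{1/3})$ vertices we already obtain $\Omega(n^{4/3})$ lines. The trouble is that $(n/k)^2$ degrades to only $n^{2/3}$ once $k$ reaches $n^{2/3}$, so this crude count cannot reach the target on its own and must be fed a genuine incidence inequality in the large-line regime.

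The heart of the argument --- and the step I expect to be the main obstacle --- is an incidence bound playing the role that the Szemer\'edi--Trotter theorem plays in the Euclidean setting, but adapted to the betweenness relation $[xyz]$ of a graph metric. Concretely, one wants to control how two lines overlap, or equivalently how many lines can pass through a fixed pair of vertices, since it is precisely the failure of the Euclidean ``two points determine a unique line'' axiom that makes Conjecture~\ref{conjChvatal} hard. If one can show that the bipartite point--line incidence structure of a graph metric forbids a suitable configuration --- for instance by bounding $\sum_{L}|L|^{2}$ against $t$ and $n$, or by ruling out many lines through two vertices unless a universal line appears --- then feeding such a bound into the convexity estimate above, and using Chv\'atal's theorem for $2$-metric spaces \cite{Chvatal} to dispose of the small-diameter case (compare the final step of the proof of Theorem~\ref{main}), should close the gap between the $n^{2/3}$ barrier and the conjectured $n^{4/3}$. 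A realistic intermediate target, already far beyond the current $\Omega(\log n)$ bound of \cite{hypergraph}, is to establish some polynomial lower bound $\Omega(n^{\varepsilon})$ and then optimise the exponent.

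Finally, the exponent $4/3$ should be pinned down from the extremal side: the proof ought to be guided by a conjecturally optimal construction --- presumably a graph built from an incidence geometry such as a projective plane or a generalized polygon --- having no universal line yet determining only $\Theta(n^{4/3})$ lines. Identifying exactly which sub-configurations this example avoids would tell us which forbidden configuration the incidence bound of the previous paragraph must exclude, and would confirm that $4/3$, rather than some larger exponent, is the correct answer. The principal risk is that no clean forbidden-configuration statement holds across all graphs; in that event one would retreat to structural subclasses (as in the distance-hereditary case handled here via Lemmas~\ref{lem:triangle} and~\ref{lem:xaxb}) and try to build the general bound up through a structural decomposition of the graph.
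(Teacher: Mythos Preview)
This statement is posed in the paper as an open conjecture attributed to Chv\'atal; the paper offers no proof, only the extremal example (the complete multipartite graph with $n^{2/3}$ parts of size $n^{1/3}$, which is distance-hereditary and has $\Theta(n^{4/3})$ lines), the remark that Chiniforooshan and Chv\'atal's result \cite{CC2} settles the diameter-two case, and the announcement of the $\Omega(\sqrt n)$ lower bound \cite{newBound}. There is therefore no proof in the paper to compare against.

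Your text is likewise not a proof but a research plan, and you are candid about this. The genuine gap is exactly where you place it: the ``incidence bound playing the role of Szemer\'edi--Trotter'' is asserted as a hope, not a lemma. Everything before that point --- the dichotomy on $k=\max_L|L|$ and the elementary estimate $t\ge n(n-1)/k^2$ --- is correct and standard, but it only yields $\Omega(n^{4/3})$ when $k=O(n^{1/3})$, and you give no mechanism for the regime $k\gg n^{1/3}$. In graph metrics two points can lie on many lines, so there is no obvious replacement for the Euclidean axiom, and no known bound of the form you describe; indeed the best general result for graphs is the $\Omega(\sqrt n)$ of \cite{newBound}, already stronger than the $\Omega(\log n)$ you cite, yet still far from $n^{4/3}$. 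Your speculation that the extremal construction comes from projective planes or generalized polygons is also off the mark: the paper names the complete multipartite example explicitly, and it is this example (a $2$-metric space) that should guide any attempt at tightness.
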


An example of an infinite family of graphs in which each graph $G$ has $\Omega(|V(G)|^{4/3})$ lines is the complete multipartite graphs $G$ whose vertices can be partitioned into $|V(G)|^{2/3}$ independent sets of size $|V(G)|^{1/3}$.
These graphs are in fact distance-hereditary.
More generally, it was proved by Chiniforooshan and Chv\'atal \cite{CC2} that any $2$-metric space on $n$ points has $\Omega(n^{4/3})$ lines (which implies that this conjecture is true for the class of graphs of diameter two).
Recently, the two authors and Supko \cite{newBound} have proved that every metric space induced by a connected graph on $n$ vertices either has $\Omega(\sqrt{n})$ distinct lines or has a universal line.

\section*{Acknowledgements}

We thank Va\v{s}ek Chv\'atal for introducing us to this subject and for his helpful comments on our manuscript, and him, Xiaomin Chen, and Cathryn Supko for many useful discussions.
This research was undertaken, in part, thanks to funding from the Canada Research Chairs program and from the Natural Sciences and Engineering Research Council of Canada.


\begin{thebibliography}{14}

\bibitem{hypergraph} P.~Aboulker, A.~Bondy, X.~Chen, E.~Chiniforooshan, V.~Chv\'{a}tal, and P.~Miao, Number of lines in hypergraphs,  
  {\tt arXiv:1308.5393 [math.CO]}.

\bibitem{newBound} P.~Aboulker, R.~Kapadia and C.~Supko, The number of lines in metric spaces induced by graphs, \textit{manuscript}.

\bibitem{BandeltMulder} H.-J. Bandelt and H. M. Mulder, Distance-hereditary graphs, \textit{J. Combin. Theory Ser. B} \textbf{41} (1986), 182--208.
 
\bibitem{BBC2} L.~Beaudou, A.~Bondy, X.~Chen, E.~Chiniforooshan, M.~Chudnovsky, V.~Chv\'{a}tal, N.~Fraiman, and Y.~Zwols, A De Bruijn-Erd\H{o}s theorem for chordal graphs, \texttt{arXiv:1201.6376v1 [math.CO]}. 

\bibitem{BBCCCCFZ:linesinhypergraphs} L. Beaudou, A. Bondy, X. Chen,  E.~Chiniforooshan, M.~Chudnovsky, V.~Chv\'{a}tal, N.~Fraiman, and Y.~Zwols, Lines in hypergraphs, to appear in \textit{Combinatorica}.

\bibitem{CC1} X. Chen and V. Chv\'{a}tal, Problems related to a de Bruijn-Erd\H os theorem, \textit{Discrete Appl. Math.} \textbf{156} (2008), 2101--2108.

\bibitem{CC2} E. Chiniforooshan and V. Chv\'atal, A de Bruijn-Erd\H{o}s theorem and metric spaces, \textit{Discrete Math. Theor. Comput. Sci.} \textbf{13} (2011), 67--74.

\bibitem{Chvatal} V. Chv\'atal, A De Bruijn-Erd\H{o}s theorem for $1$-$2$ metric spaces, to appear in \textit{Czechoslovak Math. J.}, \texttt{arXiv:1205.1170v1 [math.CO]}.

\bibitem{Chvatalmetricbetweenness} V. Chv\'atal, Sylvester-Gallai theorem and metric betweenness, \textit{Discrete Comput. Geom.} \textbf{31} (2004), 175--195.

\bibitem{Cox} H.S.M.~Coxeter, \textit{Introduction to geometry}, Wiley, New-York, 1961.

\bibitem{E43} P. Erd\H os, Three point collinearity, \textit{Amer. Math. Monthly} \textbf{50} (1943), Problem 4065, p.~65. Solutions in
Vol. \textbf{51} (1944), 169--171.

\bibitem{Howorka} E. Howorka, A characterization of distance-hereditary graphs, \textit{Quart. J. Math. Oxford Ser. 2} \textbf{28}, (1977) 417--420.
 
\bibitem{menger} K. Menger, Untersuchungen \"uber allgemeine metrik, \textit{Math. Ann.} \textbf{100} (1928), 75--163.

\end{thebibliography}
\end{document}